\theoremstyle{plain}
\newtheorem{lemma}{Lemma}[section]
\newtheorem{theorem}{Theorem}[section]
\theoremstyle{definition}
\newtheorem{remark}{Remark}[section]
\newtheorem{defn}{Definition}[section]     
\newcommand{\Z}{\mathbb{Z}}
\newcommand{\R}{\mathbb{R}}
\newcommand{\C}{\mathbb{C}}
\newcommand{\N}{\mathbb{N}}
\renewcommand{\H}{\mathcal{H}}
\newcommand{\BigOh}{\mathcal{O}}
\newcommand{\eps}{\varepsilon}
\renewcommand{\phi}{\varphi}
\renewcommand{\theta}{\vartheta}
\DeclareMathOperator{\Arg}{\mathrm{arg}}
\numberwithin{equation}{section}
\begin{document}
\title[]{On the asymptotic behaviour of the zeros of solutions of one
  functional-differential equation with rescaling}

\author[G. Derfel]{Gregory Derfel}
\address[G. D.]{Department of Mathematics and Computer Science,
Ben Gurion University of the Negev,
Beer Sheva 84105,
Israel}
\email{derfel@math.bgu.ac.il}

\author[P. J. Grabner]{Peter J. Grabner\textsuperscript{\dag{}}\textsuperscript{*}}
\thanks{\dag{} This author is supported by the Austrian Science
  Fund FWF project F5503 (part of the Special Research Program (SFB)
  ``Quasi-Monte Carlo Methods: Theory and Applications'')}
\address[P. G. \& R. T.]{Institut f\"ur Analysis und Zahlentheorie,
Technische Universit\"at Graz,
Kopernikusgasse 24,
8010 Graz,
Austria}
\email{peter.grabner@tugraz.at}

\author[R. F. Tichy]{Robert F. Tichy\textsuperscript{\ddag{}}\textsuperscript{*}}
\thanks{\ddag{} This author is supported by the Austrian Science
  Fund FWF project F5510 (part of the Special Research Program (SFB)
  ``Quasi-Monte Carlo Methods: Theory and Applications'')}
\email{tichy@tugraz.at}

\thanks{\textsuperscript{*} These authors are supported by the Doctoral
  Program ``Discrete Mathematics'' W1230.}

\dedicatory{Dedicated to Heinz Langer on the occasion of his
  80\textsuperscript{th} birthday.}

\keywords{functional differential equation, zeros}
\subjclass[2000]{Primary: ; Secondary: }
\date{\today}
\begin{abstract}
We study the asymptotic behaviour of the solutions of a functional-
differential equation with rescaling, the so-called pantograph
equation. From this we derive asymptotic information about the zeros
of these  solutions.
\end{abstract}

\maketitle

\section{Introduction}\label{sec:introduction}
\subsection{Historical  remarks }

The importance of functional and functional-differential equations
with rescaling is being increasingly recognised in the last decades,
as is their relevance to a wide range of application areas.  Such
equations -- functional and functional-differential, linear and
nonlinear -- appear as adequate tools in a number of phenomena that
display some kind of self-similarity.  It is next to impossible to
describe all recent activities in this area.  For general references
and bibliography we refer the reader to survey papers by Derfel
and co-authors \cite{Derfel1995:FDE_FE_rescaling,
Derfel_Grabner_Vogl2012:laplace_fractal_Poincare,
BDM}.

One of the best known examples of equations with rescaling is the celebrated \emph{pantograph
equation}:  
\begin{equation}\label{eq:pantograph}
  y'(z)=ay(\lambda z)+by(z).
\end{equation}
This equation  was  introduced by Ockendon \& Tayler \cite{OT} as a
mathematical model of the overhead current collection system on an
electric locomotive.
(The term `pantograph equation' was coined by Iserles~\cite{Iserles}.)
This equation and its ramifications have emerged in a striking range
of applications, including number theory \cite{Mahler}, astrophysics
\cite{Chandra}, queues \& risk theory \cite{Gaver}, stochastic games
\cite{Ferguson}, quantum theory \cite{Spiridonov},  population
dynamics \cite{HallWake1}, and  graph theory \cite{Robinson1973:counting_acyclic_digraphs}. 
The common feature of all such examples
is some self-similarity of the system under study.

In 1972, Morris, Feldstein and Bowen \cite{Morris_Feldstein_Bowen1972: Phragmen_Lindelof_FDE}
studied functional-differential equations  (FDE)  of the form:
\begin{equation}\label{eq:multi1}
    y'(z)=\sum_{k=1}^\ell a_ky(\lambda_k z)
\end{equation}
with $1>\lambda_\ell>\lambda_{\ell-1}>\cdots>\lambda_1>0$,
$a_1,\ldots, a_\ell \in\C$. They were able to obtaine deep results about the
existence, uniqueness, and asymptotic behaviour of solutions of
\eqref{eq:multi1} in the complex plane $\C$.  (For more about FDE in the
complex plane see also
\cite{Derfel_Iserles1997:pantograph_complex_domain}.)

Among other things,  Morris \emph{et al.} gave a detailed analysis of
\eqref{eq:pantograph}  in the special case $a=-1, b=0 $, i.e.,
\begin{equation}\label{eq:pantograph_b=0}
  y'(z)= -y(\lambda z)
\end{equation}
In particular,  they proved that 
\eqref{eq:pantograph},   supplemented by the initial condition 
\begin{equation}\label{eq:initial_condition}
y(0)=1
\end{equation}
has the unique solution 
\begin{equation}\label{eq:tailor_reprezentation}
  y(z)=\sum_{n=0}^\infty (-1)^n\lambda^\frac{n(n-1)}{2}\frac{z^n}{n!}.
\end{equation}
Moreover,  $y(z)$ is an  entire function of order zero, and
it  has infinitely many positive zeros, but no other zeros in the complex
plane.  
The entire function \eqref{eq:tailor_reprezentation} is sometimes
called   the  \emph{deformed exponential function} (see Sokal \cite
{Sokal2012:leading_root_theta_function}).

A number of conjectures on the zeros $0<t_0<t_1<t_2\ldots$ of
\eqref{eq:tailor_reprezentation} have been made by Morris, Feldstein
and Bowen \cite{Morris_Feldstein_Bowen1972: Phragmen_Lindelof_FDE} and
by Iserles \cite{Iserles}. In particular, Morris, Feldstein and Bowen
conjectured that
\begin{equation} \label{eq:morris_feldstein_conjecture}
 \lim_{n\rightarrow\infty} t_{n+1}/t_{n}=1/\lambda :=q
\end{equation}
Also,  in what follows we shall use the notation  $q:= 1/\lambda>1$. 

It is notable  that in  1973, independently of
\cite{Morris_Feldstein_Bowen1972: Phragmen_Lindelof_FDE}, Robinson, in his paper on counting
of acyclic digraphs \cite{Robinson1973:counting_acyclic_digraphs}, derived the
same FDE as \eqref{eq:pantograph_b=0} and  conjectured 
\begin{equation} \label{eq:robinson_conjecture}
t_{n}=(n+1)q^n + o(q^n),
\end{equation}
Clearly, \eqref{eq:robinson_conjecture} is  stronger than \eqref{eq:morris_feldstein_conjecture}.
In 2000, 
Langley   \cite{Langley2000:functional-differential}   resolved  Morris's \emph{et al.} conjecture by proving 
that
\begin{equation}\label{eq:langley}
t_{n} = nq^{n-1}(\gamma + o(1)),
\end{equation}
where $\gamma$ is a positive constant.  In 2005,   Grabner and Steinsky  \cite{Grabner_Steinsky2005:asymptotic_behaviour_poles},
independently  of  \cite{Langley2000:functional-differential},  proved  a weaker form of Robinson's conjecture: namely,
 there exists $k_0$ such that
\begin{equation}\label{eq:grabner_steinsky}
t_{k_0 +k}=(k+1)q^k + o(q^k/{k^{(1-\varepsilon}}),
\end{equation}
for all $\varepsilon>0$.

Recently, Zhang  \cite{Zhang2016:functional-differential} proved that $\gamma=1$  
in \eqref{eq:langley}   and, moreover, 
\begin{equation}\label{eq:zhang2016}
t_{n} = nq^{n-1}(1+\psi(q)n^{-2}+o(n^{-2})),
\end{equation}
where  $\psi(q)$  is the generating function of the sum-of-divisors function $\sigma(k)$
\cite{Langley2000:functional-differential}.
Also, he derived an asymptotic formula for  the oscillation amplitude $A_n$ of  $y(x)$, i.e.,
$A_n=|y(qt_n)|$.

\subsection{Main results }

All aforementioned results were concerned with the analytic function
\eqref{eq:tailor_reprezentation}, which is the unique solution of the
Cauchy problem \eqref{eq:pantograph_b=0}--\eqref{eq:initial_condition}. 
In contrast to this, in the present
paper we deal with \emph{all solutions} of \eqref{eq:pantograph_b=0},
not necessarily also satisfying \eqref{eq:initial_condition}, but
rather \emph{defined on an arbitrary half-line}.

The following natural definition  is commonly accepted
in the theory of functional-differential equations:

\begin{defn}
  If $x_0$ is a real number, then a real or complex function $y(x)$, 
  defined and continuous for $x\geq\lambda x_0$,  is said to be a
  solution of \eqref{eq:pantograph_b=0} for $x\geq x_0$, if it
  satisfies \eqref{eq:pantograph_b=0} for all $x\geq x_0$.
\end {defn}
Thus, instead of the Cauchy problem for ODE, we have an \emph {initial
  value} problem for FDE of \emph{retarded type} (i.e., $ 0<
\lambda<1$): for an arbitrary \emph {initial function} $\varphi$ defined
on $[\lambda x_0, x_0]$, a solution of the initial value problem is a
function $y(x$) defined and continuous for $x\geq\lambda x_0$, which
satisfies  \eqref{eq:pantograph_b=0} and the initial condition:
\begin{equation}
 y(x)=\varphi(x),    \qquad x\in [\lambda x_0,  x_0].
\end{equation}
Thus, the general solution of FDEs normally consists of an
\emph{infinite family of solutions}, depending on an \emph{arbitrary
  function}.

Evidently, any function that is continuous in $[\lambda x_0, x_0]$ can
be continued uniquely to a solution for $x\geq x_0$, and it makes
sense to discuss the asymptotic behaviour of all solutions  as $x
\rightarrow\infty$.

The aim of this paper is to show that the above stated asymptotic behaviour of zeros is true
not only for the analytic solution  \eqref{eq:tailor_reprezentation}, 
but for \emph{all} solutions of  \eqref{eq:pantograph_b=0}.

Namely, in Section \ref{sec:zeros},  below, we prove  that for  every solution  $y(x)$ 
of  \eqref{eq:pantograph_b=0}, 
the following asymptotic formula  for zeros $t_n$   is valid:

\begin{equation}\label{eq:DGT_zeros}
t_{n} = nq^{n-1}\left(\gamma +  \BigOh(\frac{\log k}{k})\right)
\end{equation}

It is worth noting that all solutions of \eqref{eq:pantograph_b=0},
except for \eqref{eq:tailor_reprezentation}, are \emph{non-analytic}
ones at $z=0$, but \eqref{eq:DGT_zeros} remains true for all of them.
 
In summary, we can say that the asymptotic behaviour \eqref{eq:DGT_zeros}
is an \emph {intrinsic property of the equation}
\eqref{eq:pantograph_b=0} itself, and only the constant $\gamma$ in
\eqref{eq:DGT_zeros} depends on the specific solution.
 
Our second objective (see Section
\ref{asymptotics_analytical_solitions}) is an asymptotic analysis of the
solutions of \eqref{eq:pantograph_b=0}.
 
In Section \ref{sec:case-b=0_first_order} we derive an asymptotic
formula of de Bruijn, Kato and McLeod type by a method different from
\cite{Bruijn1953:difference-differential} and
\cite{Kato_McLeod1971:functional-differential}.
 
Furthermore, in Section \ref{sec:FDE_higher_order_compressed} we give a
brief summary of related results from
\cite{Derfel1978:asymptotics_class_FDE_Kiev} and
\cite{Derfel1995:functional_differential_compressed_arguments} for
\emph{general FDE of higher order with several scaling factors}
 \begin{equation}
 y^{(m)} (x)= \sum_{j=0}^{\ell}\sum_{k=0}^{m-1}a_{jk} y^{(k)} (\alpha_j t+\beta_j),
\end{equation}
where all $\alpha_j$  are less than $1$  in modulus  i.e. $|\alpha_j|<1$.

\section{Asymptotic behaviour of analytic solutions}\label{asymptotics_analytical_solitions}
\subsection{ Asymptotic behaviour of  the solution of FDE $y'(z)=-y(\lambda z)$}\label{sec:case-b=0_first_order}
We first observe that the equation
\begin{equation}\label{eq:f}
  y'(z)=ay(\lambda z)
\end{equation}
can be simplified to
\begin{equation}\label{eq:g}
  g'(z)=-g(\lambda z);
\end{equation}
i.e. every solution of \eqref{eq:f} can be written as $y(z)=g(-az)$ for a
solution $g$ of \eqref{eq:g}.

We start with an ansatz as a power series
\begin{equation*}
  g(z)=\sum_{n=0}^\infty g_nz^n,
\end{equation*}
which gives the recursion
\begin{equation*}
 (n+1)g_{n+1}=-\lambda^ng_n.
\end{equation*}
From this we obtain
\begin{equation}
  \label{eq:gz1}
  g(z)=\sum_{n=0}^\infty (-1)^n\lambda^{\binom n2}\frac{z^n}{n!},
\end{equation}
if we assume $g_0=1$.

In order to study the asymptotic behaviour of $g(z)$ for large $z$, we
transform \eqref{eq:gz1} into an integral representation inspired by
the inversion formula for the Mellin transform
\begin{align}
  \label{eq:gz2}
  g(z)&=\frac1{2\pi i}\int_{\H}\Gamma(s)\lambda^{\binom{-s}2}z^{-s}\,ds=
\frac1{2\pi i}\int_{\H}\frac{\pi}{\sin(\pi s)}
\frac{\lambda^{\binom{-s}2}z^{-s}}{\Gamma(1-s)}\,ds,\\
g(-z)&=\frac1{2\pi i}\int_{\H}\pi\cot(\pi s)
\frac{\lambda^{\binom{-s}2}z^{-s}}{\Gamma(1-s)}\,ds,\label{eq:gz3}
\end{align}
where $\H$ is a contour encircling the negative real axis counterclockwise, like
the Hankel-contour used in the theory of the $\Gamma$-function (see
Figure~\ref{fig:hankel}). We will use the representation \eqref{eq:gz2} for
deriving an asymptotic formula for $g(z)$ for $|\Arg(z)|\leq\pi-\eps$ (for
$\eps>0$), whereas \eqref{eq:gz3} will be used for the asymptotic of $g(z)$ for
$|\Arg(-z)|\leq\pi-\eps$. These representations can be proved by residue
calculus and taking care of the growth order of the integrand along the
contour. In the case $\lambda=\frac12$ a similar representation was used in
\cite{Grabner_Steinsky2005:asymptotic_behaviour_poles}.
\begin{figure}[h]
 \centering

 \includegraphics[width=\textwidth]{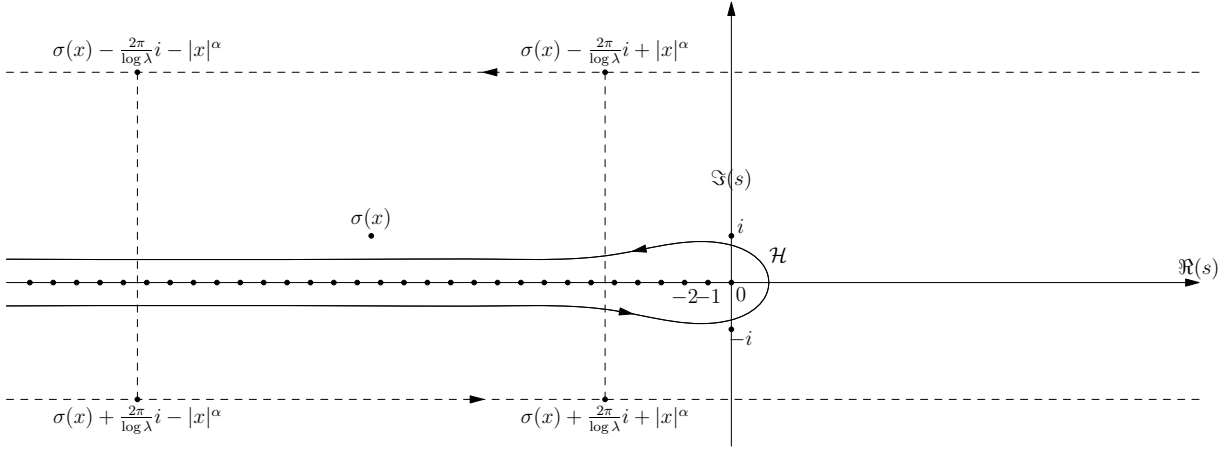}
 \caption{The contour of integration $\H$}
 \label{fig:hankel}
\end{figure}

Similarly, the representation
\begin{equation*}
  g(z)=\frac1{2\pi i}
\left(\int\limits_{-\infty-i}^{\infty-i}-\int\limits_{-\infty+i}^{\infty+i}\right)
\Gamma(s)\lambda^{\binom{-s}2}z^{-s}\,ds\\
\end{equation*}
can be shown. Here the contour of integration is deformed into two horizontal
lines above and below the real axis.

\begin{theorem}\label{thm:asy1}
  Let $g$ be the entire solution of \eqref{eq:g} with $g(0)=1$. Then for
  $\eps>0$ and $|\Arg(z)|\leq\pi-\eps$ and $z\to\infty$ the asymptotic expansion
  \begin{multline}
    \label{eq:asy1}
    g(z)\sim Cz^A \log(z)^B\exp\left(-\frac1{2\log\lambda}
\left(\log(z)-\log(\log(z))\right)^2\right)\\
\times H\left(\frac1{\log\lambda}\left(\log(z)-\log(\log(z))
\right)-\frac12+\frac{\log(-\log\lambda)}{\log\lambda}\right)
  \end{multline}
  holds, where the periodic function $H$ of period $2$ is given by the Fourier
  series
\begin{equation}
  \label{eq:h-fourier}
  H(x)=\sqrt{\frac{2\pi}{-\log\lambda}}
\sum_{k\in\mathbb{Z}}e^{\frac{(2k+1)^2\pi^2}{2\log\lambda}}e^{\pi i(2k+1)x},
\end{equation}
and
\begin{equation}\label{eq:ABC}
  \begin{aligned}
    A&=\frac12-\frac1{\log\lambda}-\frac{\log(-\log\lambda)}{\log\lambda}\\
    B&=\frac{\log(-\log\lambda)}{\log\lambda}-1\\
    C&=\exp\left(\frac12-\frac{\log\lambda}8+\log(-\log\lambda)-
      \frac{\log(-\log\lambda)}{\log\lambda}-
      \frac{(\log(-\log\lambda))^2}{2\log\lambda}-\frac12\log(2\pi)\right).
  \end{aligned}
\end{equation}
\end{theorem}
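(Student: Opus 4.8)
The plan is to read off the asymptotics from the Mellin--Barnes representation \eqref{eq:gz2}, in the form using $1/\sin(\pi s)$. Setting $L:=-\log\lambda=\log q>0$, substituting $s=-w$, and passing to the representation by the two horizontal lines $\operatorname{Im}s=\pm1$ just above and below the real axis (the one displayed just before the theorem), one is left with an integral of $\frac{\pi}{\sin(\pi w)}\,e^{\Psi(w)}$ over the lines $\operatorname{Im}w=\pm1$, where
\[
  \Psi(w)=\binom w2\log\lambda+w\log z-\log\Gamma(1+w).
\]
The point of the reflection formula $\Gamma(s)=\pi/(\sin(\pi s)\Gamma(1-s))$ is that the dominant part of the contour sits near $w=w^{*}$ with $w^{*}\to\infty$ (the index of the largest Taylor coefficient of $g$), where $1/\Gamma(1+w)$ is harmless while $1/\sin(\pi w)$ carries all the oscillation; it is this last factor that will produce the periodic function $H$.

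Next I would carry out a saddle point analysis. Inserting Stirling's formula $\log\Gamma(1+w)=w\log w-w+\tfrac12\log(2\pi w)+\BigOh(1/w)$, the saddle $w^{*}$ solves $\Psi'(w^{*})=0$, i.e.
\[
  w^{*}\log(1/\lambda)+\log w^{*}+\tfrac1{2w^{*}}=\log z+\tfrac12\log(1/\lambda)+\BigOh\!\big(1/(w^{*})^{2}\big),
\]
whence $w^{*}=\frac1{\log(1/\lambda)}\big(\log z-\log\log z\big)+\tfrac12+\frac{\log\log(1/\lambda)}{\log(1/\lambda)}+\cdots$, and $\Psi''(w^{*})=\log\lambda+\BigOh(1/w^{*})$, which is negative for $z$ large, so the two horizontal lines are of steepest-descent type. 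On $\operatorname{Im}w=\pm1$ I would expand $\pi/\sin(\pi w)=\mp2\pi i\sum_{m\ge0}e^{\pm i\pi(2m+1)w}$ (convergent off the real axis), replace $\Psi$ by its second-order Taylor polynomial at $w^{*}$, and evaluate the resulting Gaussian integrals term by term in $m$. Adding the contributions of the two lines, which come in complex-conjugate pairs, and re-indexing the odd integers by $k\in\Z$, one obtains
\[
  g(z)\sim e^{\Psi(w^{*})}\,\sqrt{\frac{2\pi}{-\log\lambda}}\sum_{k\in\Z}e^{\frac{(2k+1)^{2}\pi^{2}}{2\log\lambda}}e^{i\pi(2k+1)w^{*}}=e^{\Psi(w^{*})}\,H(w^{*}),
\]
which is exactly \eqref{eq:h-fourier}. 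Finally, using the saddle equation to simplify $\Psi(w^{*})=\tfrac12(-\log\lambda)(w^{*})^{2}+w^{*}+\tfrac12-\tfrac12\log(2\pi w^{*})+\BigOh(1/w^{*})$ and substituting the expansion of $w^{*}$, the quadratic term produces the factor $\exp\!\big(-\frac1{2\log\lambda}(\log z-\log\log z)^{2}\big)$, while the remaining linear and logarithmic contributions, together with $H(w^{*})=H\big(\frac1{\log\lambda}(\log z-\log\log z)-\frac12+\frac{\log(-\log\lambda)}{\log\lambda}\big)+o(1)$ (valid because $H$ is even, $2$-periodic and continuous), collect precisely into $Cz^{A}\log(z)^{B}$ with $A,B,C$ as in \eqref{eq:ABC}.

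Several points need care. Making the saddle point evaluation rigorous requires uniform error bounds for Stirling's formula along the contour and a bound on the part of the integral with $|\operatorname{Re}w-w^{*}|$ large, where the Gaussian arising from $\tfrac12\Psi''(w^{*})(w-w^{*})^{2}$ supplies the decay; one must also justify interchanging $\sum_{m}$ with the integral and identify the resulting series with $H$. A subtlety worth flagging is that the saddle equation must be solved to accuracy $o(1/\log z)$: the term $\frac1{\log(1/\lambda)}\cdot\frac{\log\log z}{\log z}$ in $w^{*}$, when multiplied by $\log z$ inside $e^{\Psi(w^{*})}$, contributes a factor $(\log z)^{1/\log(1/\lambda)}$ and is essential for the correct exponent $B$, whereas inside the argument of $H$ that same term is only $o(1)$. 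The main obstacle, however, will be uniformity for $|\Arg z|\le\pi-\eps$: there $w^{*}\approx(\log|z|+i\Arg z)/\log(1/\lambda)$ acquires a bounded imaginary part, and once $|\Arg z|>\log(1/\lambda)$ the saddle leaves the strip $|\operatorname{Im}w|<1$, so the horizontal lines must first be moved (or the case reduced to small $|\Arg z|$ by a suitable deformation of $\H$), checking that no poles of $\pi/\sin(\pi w)$ are crossed and that all estimates remain uniform in $z$. Carrying one further order in Stirling and in the saddle expansion then upgrades the leading term to the full asymptotic expansion \eqref{eq:asy1}.
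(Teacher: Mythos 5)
Your plan is essentially the paper's own proof: the same Mellin--Barnes representation, the same saddle (your $w^{*}$ is exactly $-\sigma(\log z)$ from \eqref{eq:sigma}), and the same prefactor $e^{\Psi(w^{*})}$ producing $Cz^{A}\log(z)^{B}\exp(\dots)$. The one genuinely different step is how the periodic factor is extracted. The paper stays on the Hankel contour \eqref{eq:gz2}, cuts out a rectangle $R_x$ of half-height $2\pi/|\log\lambda|$ around the saddle, evaluates the local integral by the residues of $\pi/\sin(\pi s)$ at the integers, obtaining the theta sum \eqref{eq:theta}, and only then passes to the Fourier series \eqref{eq:H(y)} by Poisson summation. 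You instead expand the cosecant on the two horizontal lines into $e^{\pm i\pi(2m+1)w}$ and integrate each term against the Gaussian, which yields the Fourier series of $H$ directly; in effect you perform the Poisson summation implicitly, at the price of justifying the interchange of sum and integral (easy, since the expansion converges geometrically off the real axis). What the paper's choice buys is exactly the point you flag as your ``main obstacle'': since $|\operatorname{Im}\sigma(x)|\le\pi/|\log\lambda|$ for $|\Arg z|\le\pi-\eps$, the rectangle of half-height $2\pi/|\log\lambda|$ always contains the saddle, so no uniformity issue ever arises, whereas with fixed lines $\operatorname{Im}w=\pm1$ you must raise the lines (legitimate, as the poles of $\pi/\sin(\pi w)$ are confined to the real axis and the kernel decays like $e^{-\pi|\operatorname{Im}w|}$). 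Both versions need the same tail estimates away from the saddle via the stated $\Gamma$-bounds, which you acknowledge.

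The only substantive caution concerns the final bookkeeping, which you assert rather than carry out. Your simplified form $\Psi(w^{*})=\tfrac12(-\log\lambda)(w^{*})^{2}+w^{*}+\tfrac12-\tfrac12\log(2\pi w^{*})+\BigOh(1/w^{*})$ is correct, but it is obtained \emph{using} the saddle equation, so substituting an approximation of $w^{*}$ into it amplifies the error by a factor $\asymp\log z$; as you note, the saddle must therefore be solved to accuracy $o(1/\log z)$, and this forces you to keep not only the $\frac{\log\log z}{\log(1/\lambda)\log z}$ term (which fixes $B$, as you observe) but also the term of exact order $1/\log z$, which feeds into the constant $C$. A cleaner variant is to evaluate $\Psi$ directly at the explicit approximation $\tilde w=\frac1{\log(1/\lambda)}(\log z-\log\log z)+\frac12+\frac{\log\log(1/\lambda)}{\log(1/\lambda)}$ and use stationarity, $\Psi'(w^{*})=0$, so that $\Psi(\tilde w)=\Psi(w^{*})+\BigOh\bigl((\log\log z/\log z)^{2}\bigr)$, exactly as the paper does when it inserts \eqref{eq:sigma} into \eqref{eq:kernel}. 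When you actually do this, check the constant independently rather than claiming it ``collects precisely'' into \eqref{eq:ABC}: the exponents $A$ and $B$ do come out as stated, but my own bookkeeping (confirmed numerically for $\lambda=e^{-1}$) gives the constant $\exp\bigl(-\tfrac{\log\lambda}8+\log(-\log\lambda)-\tfrac{(\log(-\log\lambda))^{2}}{2\log\lambda}-\tfrac12\log(2\pi)\bigr)$, i.e.\ without the summands $\tfrac12-\tfrac{\log(-\log\lambda)}{\log\lambda}$ appearing in \eqref{eq:ABC}; this is precisely the kind of discrepancy produced by dropping the $1/\log z$ term of the saddle, so this step has to be written out in full, not waved through.
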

\begin{remark}
  This theorem extends the real asymptotic of \textbf{all} solutions on
  $\mathbb{R}^+$ given in \cite{Kato_McLeod1971:functional-differential} to the
  complex asymptotic of the \textbf{entire} solution in an angular region
  avoiding the negative real axis.
\end{remark}
\begin{theorem}\label{thm:asy2}
  Let $g$ be the entire solution of \eqref{eq:g} with $g(0)=1$. Then for
  $\eps>0$ and $|\Arg(-z)|\leq\pi-\eps$ and $z\to\infty$ the asymptotic
  expansion
  \begin{multline}
    \label{eq:asy2}
    g(-z)\sim Cz^A \log(-z)^B\exp\left(-\frac1{2\log\lambda}
\left(\log(-z)-\log(\log(-z))\right)^2\right)\\
\times K\left(\frac1{\log\lambda}\left(\log(-z)-\log(\log(-z))\right)
-\frac12+\frac{\log(-\log\lambda)}{\log\lambda}\right)
  \end{multline}
  holds, where the periodic function $K$ of period $1$ is given by the Fourier
  series
\begin{equation}
  \label{eq:k-fourier}
  K(x)=\sqrt{\frac{2\pi}{-\log\lambda}}\sum_{k\in\mathbb{Z}}
e^{\frac{2k^2\pi^2}{\log\lambda}}e^{2\pi ikx},
\end{equation}
and $A$, $B$, and $C$ are given by \eqref{eq:ABC}.
\end{theorem}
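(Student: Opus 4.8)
The plan is to mirror the derivation of Theorem~\ref{thm:asy1}, but starting from the integral representation \eqref{eq:gz3} rather than \eqref{eq:gz2}. First I would rewrite \eqref{eq:gz3} by expanding the Hankel-type contour $\H$ into the difference of two horizontal lines, as already indicated in the excerpt, so that $g(-z)$ is expressed as an integral of $\pi\cot(\pi s)\,\lambda^{\binom{-s}2}z^{-s}/\Gamma(1-s)$ over a vertical line $\Re(s)=c$. The key point is that $\cot(\pi s)$ has the Fourier-type expansion $\pi\cot(\pi s) = \pi i + 2\pi i\sum_{m\ge 1}e^{2\pi i m s}$ (valid for $\Im(s)>0$, with the conjugate expansion below the axis), which replaces the role played by $\pi/\sin(\pi s)$ in Theorem~\ref{thm:asy1}; the resulting exponential sum over all integers $m$ (rather than over $m$ with alternating signs) is precisely what turns the period-$2$ function $H$ into the period-$1$ function $K$ and replaces the Fourier coefficients $e^{(2k+1)^2\pi^2/(2\log\lambda)}$ by $e^{2k^2\pi^2/\log\lambda}$.

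Next I would perform the saddle-point analysis. Writing $\lambda^{\binom{-s}2} = \exp\bigl(\frac{\log\lambda}{2}(s^2+s)\bigr)$ and $z^{-s}=\exp(-s\log z)$, the dominant part of the integrand (after using Stirling for $1/\Gamma(1-s)$) has an exponent that is quadratic in $s$ with a large linear term $-s\log z$; completing the square gives a Gaussian integral whose saddle lies near $s_0 \approx \frac{1}{\log\lambda}\bigl(\log(-z)-\log\log(-z)\bigr)$, exactly as in the proof of Theorem~\ref{thm:asy1} with $z$ replaced by $-z$. Evaluating the Gaussian integral produces the factor $C z^A\log(-z)^B\exp\bigl(-\frac{1}{2\log\lambda}(\log(-z)-\log\log(-z))^2\bigr)$ with $A,B,C$ as in \eqref{eq:ABC} — these constants are insensitive to the sign change because they come only from the quadratic/Stirling part, not from the periodic correction. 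The sum over $m$ coming from the $\cot$ expansion contributes, after shifting each saddle point by $m$, the Fourier series \eqref{eq:k-fourier}; one must check that the two half-line contributions (above and below the real axis) combine to give a single real period-$1$ function $K$ rather than two separate period-$2$ pieces, and this is where the difference in periodicity between Theorems \ref{thm:asy1} and \ref{thm:asy2} genuinely originates.

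The main obstacle I anticipate is the rigorous justification of the tail and error estimates: one must bound the integrand along the horizontal portions of $\H$ and along the vertical line away from the saddle, using the rapid decay of $1/\Gamma(1-s)$ for $\Re(s)\to+\infty$ and the Gaussian decay of $\lambda^{\binom{-s}2}$ in the imaginary direction, and show these are negligible compared with the main term; in the region $|\Arg(-z)|\le\pi-\eps$ the relevant bound is uniform but the constants depend on $\eps$, and keeping track of the interchange of summation (over $m$) and integration requires the saddle contributions for large $|m|$ to be summable, which follows from the factor $e^{2k^2\pi^2/\log\lambda}$ decaying doubly-exponentially since $\log\lambda<0$. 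Once these estimates are in place, the asymptotic expansion \eqref{eq:asy2} follows by collecting the Gaussian evaluation and the convergent Fourier series, completing the proof.
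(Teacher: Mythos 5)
Your plan is essentially the paper's: the authors prove Theorem~\ref{thm:asy2} by rerunning the saddle-point argument of Theorem~\ref{thm:asy1} on the representation \eqref{eq:gz3}, the only change being that the residues of $\pi\cot(\pi s)$ at the integers are all $+1$ instead of $(-1)^n$, so the theta-type sum in \eqref{eq:theta} loses its alternating signs and Poisson summation produces the period-$1$ function $K$ with coefficients $e^{2k^2\pi^2/\log\lambda}$ in place of $H$. Where you differ is the device used to extract the periodic factor: rather than collecting residues inside the rectangle around the saddle and then Poisson-summing, you expand $\pi\cot(\pi s)$ into a series of exponentials $e^{\pm2\pi ims}$ away from the real axis and integrate term by term, each shifted Gaussian contributing $e^{2\pi^2m^2/\log\lambda}e^{\pm2\pi im\sigma(x)}$; this yields the Fourier series \eqref{eq:k-fourier} directly and is a legitimate dual of the paper's computation (it performs the Poisson summation on the kernel in advance), at the price of having to justify the interchange of the $m$-sum with the integral, which you correctly note is harmless because of the Gaussian decay of the coefficients. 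Two slips should be repaired in a full write-up. First, the integral cannot be placed on a vertical line $\Re(s)=c$: since $\lambda^{\binom{-s}2}=\exp\bigl(\tfrac{\log\lambda}2(s^2+s)\bigr)$ grows like $e^{|\log\lambda|t^2/2}$ as $|\Im s|=t\to\infty$, such an integral diverges; the correct contour is the one you also mention, namely the two horizontal lines $\Im s=\pm1$ (or the rectangle around the saddle used in the paper), on which the cotangent expansion converges uniformly and the integrand decays in both horizontal directions. Second, the signs: for $\Im s>0$ one has $\pi\cot(\pi s)=-\pi i-2\pi i\sum_{m\geq1}e^{2\pi ims}$ (your version is the lower half-plane expansion), and the saddle is displaced by $\mp2\pi im/\log\lambda$, i.e.\ in the imaginary direction, not by $m$; these signs are exactly what one must track to see that the upper and lower contributions combine into the single real period-$1$ function $K$, the point you rightly identify as the source of the difference between Theorems~\ref{thm:asy1} and~\ref{thm:asy2}. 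With those corrections, your argument matches the paper's proof in substance.
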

\begin{proof}[Proof of Theorem~\ref{thm:asy1}]
  We apply a saddle point approximation combined with residue calculus to the
  second integral representation given in \eqref{eq:gz2}. For this purpose, we
  consider
  \begin{equation}\label{eq:kernel}
    \frac{\lambda^{\binom{-s}2}z^{-s}}{\Gamma(1-s)}=
  \exp\left(\frac{s(s+1)}2\log\lambda-s\log(z)-\log(-s)-\log(\Gamma(-s))\right).
  \end{equation}
We find the saddle point as the stationary point of the argument of the
exponential:
\begin{equation}\label{eq:saddle}
  \left(s+\frac12\right)\log\lambda-x-\frac1s+\psi(-s)=0,
\end{equation}
where $\psi=\frac{\Gamma'}{\Gamma}$; for simplicity, we set $x=\log(z)$. From
\cite{Magnus_Oberhettinger_Soni1966:formulas_theorems_special} we infer
\begin{equation*}
  \psi(-s)=\log(-s)+\frac1{2s}+\BigOh\left(\frac1{s^2}\right),
\end{equation*}
valid for $|\Arg(-s)|\leq\pi-\eps$ for $\eps>0$.
This shows that \eqref{eq:saddle} has a unique solution $\sigma(x)$ satisfying
\begin{equation}\label{eq:sigma}
\sigma(x)=\frac1{\log\lambda}\left(x-\log(x)\right)-\frac12+
\frac{\log(-\log\lambda)}{\log\lambda}+\BigOh(\log(x)/x),
\end{equation}
which lies close to the negative real axis; notice that the imaginary part of
$\sigma(x)$ is bounded by $\frac\pi{|\log\lambda|}$. Around $s=\sigma(x)$ we
have the following approximation
\begin{multline}\label{eq:saddle1}
  \frac{s(s+1)}2\log\lambda-sx-\log(s)-\log(\Gamma(-s))\\=
\frac{\sigma(x)(\sigma(x)+1)}2\log\lambda-x\sigma(x)-\log(-\sigma(x))-
\log(\Gamma(-\sigma(x)))\\
+\frac12\left(\log\lambda+\frac1{\sigma(x)^2}-\psi'(-\sigma(x))\right)
\left(s-\sigma(x)\right)^2+\BigOh\left(x^{-2}(s-\sigma(x))^3\right)
\end{multline}
for $s-\sigma(x)=\BigOh(x^\alpha)$ for $\alpha<\frac23$. 

Inserting the approximation \eqref{eq:saddle1} into the integral representation
\eqref{eq:gz2} (and splitting the range of integration into $|s-\sigma(x)|\leq
x^\alpha$ and $|s-\sigma(x)|\geq x^\alpha$) yields
\begin{multline}\label{eq:gz4}
  g(z)=\frac{\lambda^{\binom{-\sigma(x)}2}}{\Gamma(1-\sigma(x))}e^{-x\sigma(x)}
\frac1{2\pi i}\oint_{R_x}\exp\left(\frac12(s-\sigma(x))^2\log\lambda\right)
\frac\pi{\sin(\pi s)}\,ds
\left(1+\BigOh(x^{2\alpha-1})\right)\\+\frac1{2\pi
  i}\int_{R'_x}\Gamma(s)\lambda^{\binom{-s}2} z^{-s}\,ds,
\end{multline}
if we choose $\alpha<\frac12$. Here $R_x$ denotes the positively oriented
rectangle with corners $\sigma(x)\pm |x|^\alpha\pm\frac{2\pi i}{\log\lambda}$
and $R'_x$ denotes the remaining part of the dashed contour in
Figure~\ref{fig:hankel}. 

The first integral in \eqref{eq:gz4} can be evaluated by residue calculus
\begin{multline}\label{eq:theta}
  \frac1{2\pi i}\oint_{R_x}\exp\left(\frac12(s-\sigma(x))^2\log\lambda\right)
\frac\pi{\sin(\pi s)}\,ds\\=\sum_{\substack{n\in\Z\\|n-\sigma(x)|<|x|^\alpha}}(-1)^n
e^{\frac12(n-\sigma(x))^2\log\lambda}=\sum_{n\in\Z}(-1)^n
e^{\frac12(n-\sigma(x))^2\log\lambda}+
\BigOh\left(e^{\frac12|x|^{2\alpha}\log\lambda}\right).
\end{multline}
This series represents a continuous periodic function of period $2$, whose
Fourier coefficients can be computed by a variant of Poisson's summation formula
\begin{equation}
  \label{eq:H(y)}
  H(y)=\sum_{n\in\Z}(-1)^n
e^{\frac12(n-y)^2\log\lambda}=\frac{\sqrt{2\pi}}{\sqrt{-\log\lambda}}
\sum_{k\in\Z}e^{\frac{(2k+1)^2\pi^2}{2\log\lambda}}e^{i\pi(2k+1)y}.
\end{equation}

For estimating the remaining integral over $R_x'$ in \eqref{eq:gz4}, we use the
estimates (cf.~\cite{Magnus_Oberhettinger_Soni1966:formulas_theorems_special})
\begin{align*}
  |\Gamma(t\pm iC)|&\leq \sqrt{2\pi}(t^2+C^2)^{\frac12(t-\frac12)}
e^{-\frac{\pi C}2+\frac1{6C}}\\
|\Gamma(-t\pm iC)|&\leq\frac{\pi}{|C|\cosh(\pi C)^{\frac12}\Gamma(t)}
\end{align*}
valid for $t\geq1$ and $C>0$.

Inserting the asymptotic information about $\sigma(x)$ from \eqref{eq:sigma}
into \eqref{eq:kernel} and using Stirling's formula for the $\Gamma$-function
yields
\begin{equation*}
  \frac{\lambda^{\binom{-\sigma(x)}2}e^{-x\sigma(x)}}{\Gamma(1-\sigma(x))}=
Ce^{Ax}x^B\exp\left(-\frac1{2\log\lambda}(x-\log(x))^2\right),
\end{equation*}
with $A$, $B$, and $C$ as given in \eqref{eq:ABC}.
\end{proof}
\begin{proof}[Proof of Theorem~\ref{thm:asy2}] For the proof of \eqref{eq:asy2}
  we use the integral representation \eqref{eq:gz3} and argue along the same
  lines as in the proof of Theorem~\ref{thm:asy1}. The only technical difference
  is that the residues of $\pi\cot(\pi s)$ are all equal to $1$, which avoids
  the sign change occurring in \eqref{eq:theta}.
\end{proof}
\subsection{FDE of higher order with compressed arguments }\label{sec:FDE_higher_order_compressed}

In this section we consider the general FDE with  rescaling
\begin{equation}\label{eq:high_order}
 y^{(m)} (x)= \sum_{j=0}^{l}\sum_{k=0}^{m-1}a_{jk} y^{(k)} (\alpha_j t+\beta_j),
\end{equation}
where $a_{jk}\in\C$ and $\alpha_j, \beta_j \in \R$.
From here on we consider,
 solutions of \eqref{eq:high_order}
defined on the whole real line $\R$, when there exists at least one
$\beta_j \neq 0$, and possibly defined on a half-line $\R_+$, or
$\R_-$, when all $\beta_j=0$.

For such equations, it is hard to expect the existence of an
asymptotic formula similar to \eqref{eq:asy1}. However, we can derive
sharp estimates from above and below for solutions of
\eqref{eq:high_order}.

Below, we give a brief summary of the related results from
\cite{Derfel1978:asymptotics_class_FDE_Kiev} and
\cite{Derfel1995:functional_differential_compressed_arguments}.
    
 Denote: 
 \begin{equation}
   \alpha=\min_{0\leq j\leq l}| \alpha_j|, \qquad      A=\max_{0\leq j\leq l}| \alpha_j|.
 \end{equation}
 
 Then:
 \begin{enumerate}
 \item[(i)] Every solution $y(x)$ of \eqref{eq:high_order} is an
   analytic function, that can be extended as an entire function
   $y(z)$ of order zero in $\C$.  
 \item[(ii)] Every solution of
   \eqref{eq:high_order} satisfies the estimate
   \begin{equation}\label{eq:growth_estimate}
     |y(z)|\leq C \exp \{\gamma \log^{2}(1+|z|)\},   \qquad z\in \C, 
   \end{equation}
   for some $C>0$, and
   \begin{equation}
     \gamma>m/(2|\log A|) 
   \end{equation}
 \item[(iii)] Every solution of \eqref{eq:high_order} is unbounded
 \emph{on any ray}
   emanating from the origin $z=0$ (as it is an entire function of
   order zero). In particular, every solution is unbounded on $\R_+$
   and $\R_-$. 
   
   This result cannot be strengthened in general due to
   the existence of polynomial solutions. It was proved in
   \cite{Derfel1978:asymptotics_class_FDE_Kiev} and
   \cite{Derfel1995:functional_differential_compressed_arguments} that:
   
 \item[(iv)] A necessary and sufficient condition for the existence of
   polynomial solutions of \eqref{eq:high_order} is that:
   \begin{equation}
     \sum_{j=0}^{l} a_{j0}\alpha^n_j =0
   \end{equation}
   for some $n\in\N$.
   
 Under the assumption that \eqref{eq:high_order} has no polynomial
 solutions, and $\beta_j=0$ for all $j$, one can prove a result
 stronger than (iii).  Roughly speaking, every nontrivial solution of
 \eqref{eq:high_order} grows as $|z|\rightarrow\infty$ faster than
 $\exp \{\gamma \log^{2}|z|\}$ for some $\gamma>0$.  More precisely:
 
 \item[(v)] Every solution of \eqref{eq:high_order}, which
 \emph   {at least on one ray}  emanating from the origin  
 satisfies
 \eqref{eq:growth_estimate} for some $C>0$, and
\begin{equation}
 \gamma<1/(2|\log \alpha|) 
\end{equation}
vanishes identically.
\end{enumerate}
\begin{remark}
 The  above stated results,  formulated for the equation   \eqref{eq:high_order},
 with \emph{constant coefficients},
  remain true    with minor changes also for FDE of higher order, with several compressed arguments
  and  \emph{polynomial coefficients}:
 \begin{equation}\label{eq:high_order_polynomial}
 y^{(m)} (x)= \sum_{j=0}^{l}\sum_{k=0}^{m-1} \sum_{nu=0}^{r}  a_{jk\nu}x^{\nu} y^{(k)} (\alpha_j t+\beta_j),
\end{equation}

\end{remark}

\section{Asymptotic behaviour of the zeros}\label{sec:zeros}

The central result of this section is the following theorem.

 \begin{theorem}\label{thm: zeros}
Let $y(x)$  be a solution  of  \eqref{eq:pantograph_b=0}, 
and $0<x_0<x_1<x_2<\ldots$  be the zeros of $y(x)$. 
Then there exists a  positive constant $\gamma$, such that:
\begin{equation}\label{eq:DGT_zeros_1}
x_{n} = nq^{n-1}\left(\gamma + \BigOh(\frac{\log k}{k})\right).
\end{equation}
\end{theorem}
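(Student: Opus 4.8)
The plan is to reduce the general-solution case to the already-understood analytic case of Theorem~\ref{thm:asy1}, exploiting the fact that the delay equation \eqref{eq:pantograph_b=0} has a one-dimensional ``asymptotic'' structure once $x$ is large. First I would recall that any solution $y$ of \eqref{eq:pantograph_b=0} on a half-line $[\lambda x_0,\infty)$ is, by bootstrapping the differential equation, $C^\infty$ on $(x_0,\infty)$ and in fact extends to an entire function of order zero on that shifted scale; the nonanalyticity mentioned in the introduction only concerns behaviour near the left endpoint, not the asymptotics at $+\infty$. The key structural fact is that the space of solutions, restricted to large $x$, is governed by the same functional equation as $g$, so one expects $y(x) = c\,g(x) + (\text{smaller terms})$ in an appropriate asymptotic sense, where $g$ is the canonical entire solution of \eqref{eq:g} scaled so that $y'(z)=-y(\lambda z)$ corresponds to $g$ via $z\mapsto z$ (here $a=-1$, so $y(z)=g(z)$ directly).

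The heart of the argument is an asymptotic expansion of the form
\begin{equation*}
  y(x) = C x^A (\log x)^B \exp\!\left(-\tfrac{1}{2\log\lambda}(\log x-\log\log x)^2\right) \Phi\!\left(\tfrac{1}{\log\lambda}(\log x-\log\log x)-\tfrac12+\tfrac{\log(-\log\lambda)}{\log\lambda}\right)\bigl(1+o(1)\bigr),
\end{equation*}
where $\Phi$ is a (not necessarily purely trigonometric, since $y$ need not be the canonical solution) bounded function that is, up to the $o(1)$, periodic of period $2$ and governs the oscillation. I would obtain this either by (a) representing $y$ via a Mellin--Barnes-type integral as in \eqref{eq:gz2} with the Dirichlet-series data replaced by the generalized Dirichlet series built from the Taylor-type coefficients of $y$ at a base point, and running the same saddle-point analysis as in the proof of Theorem~\ref{thm:asy1}; the saddle $\sigma(x)$ in \eqref{eq:sigma} is determined by \eqref{eq:saddle}, which does not see the particular solution, only the kernel $\lambda^{\binom{-s}{2}}z^{-s}/\Gamma(1-s)$, so the $A,B,C$ and the Gaussian profile are universal and only the periodic factor's Fourier coefficients change; or (b) more softly, by the de Bruijn--Kato--McLeod real-line asymptotics (Section~\ref{sec:case-b=0_first_order}) applied to $y$ on $\mathbb{R}^+$, which already yield the Gaussian-times-oscillation shape for all solutions.

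Once the asymptotic shape is in hand, the zeros $x_n$ are located where the oscillatory factor $\Phi$ crosses zero. Writing $u = \frac{1}{\log\lambda}(\log x - \log\log x) - \tfrac12 + \frac{\log(-\log\lambda)}{\log\lambda}$, the zeros of $y$ correspond asymptotically to $u$ running through the (asymptotically equally spaced, spacing $1$ because period-$2$ with a sign change) zeros $u_n = u_0 + n$ of $\Phi$, with an error $o(1)$ coming from the $(1+o(1))$ in the expansion. Inverting the relation $\log x - \log\log x = (\log\lambda)(u + \tfrac12) - \log(-\log\lambda)$: this is a transcendental equation of ``Lambert'' type whose solution is $\log x = -n\log\lambda + \log n + c + O(\log n / n) = n\log q + \log n + c + O(\log n/n)$, hence $x_n = n q^{n} e^{c}(1+O(\log n / n)) = n q^{n-1}(\gamma + O(\log n/n))$ for the appropriate constant $\gamma = q\,e^{c}$ depending on the solution through the phase of $\Phi$. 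The main obstacle is making step~(a)/(b) rigorous for a general solution: one must (i) set up an integral or series representation valid for \emph{arbitrary} initial data $\varphi$ on $[\lambda x_0, x_0]$ and control the analogue of the tail integral over $R_x'$ in \eqref{eq:gz4} without the clean $\Gamma$-function decay, and (ii) rule out pathological initial data for which the leading oscillatory coefficient degenerates or the zeros cluster irregularly — i.e. one needs a quantitative lower bound on the amplitude $|\Phi|$ between consecutive zeros (a ``no anomalously small oscillation'' estimate), which is what upgrades the $o(1)$ phase error to the $O(\log k/k)$ claimed in \eqref{eq:DGT_zeros_1}. I expect the amplitude lower bound, via the explicit near-period-$2$ structure and a Phragmén--Lindelöf / nonvanishing argument for the relevant theta-like function, to be the delicate point.
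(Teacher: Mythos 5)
Your alternative (b) is, in outline, the paper's own argument: quote the de Bruijn--Kato--McLeod asymptotic formula \eqref{eq:asymptotics_Kato_Mcleod1}, which is valid for \emph{every} solution of \eqref{eq:pantograph_b=0} on a half-line --- so no new Mellin--Barnes representation for arbitrary initial data is needed, and your route (a) together with your concern (i) can simply be dropped --- and then invert the relation $\log x-\log\log x=(u_0+k)\log q$. That inversion, carried out with the ansatz $x_k=C(k)kq^k$ and ending with $C(k)=q^{u_0}\log q\,\bigl(1+\BigOh(\log k/k)\bigr)$, is precisely the paper's Lemma~\ref{lemma: q_periodic_zero }, and your Lambert-type computation reproduces it; note that the $\BigOh(\log k/k)$ in \eqref{eq:DGT_zeros_1} comes purely from this inversion, not from any upgrade of the $o(1)$ in the Kato--McLeod expansion.

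Where you diverge, and where your proposal is left with a genuine gap, is your concern (ii): you posit an unknown near-periodic amplitude $\Phi$ and declare a quantitative lower bound on $|\Phi|$ between consecutive zeros to be the missing delicate ingredient. The paper does not argue this way. It observes that for $a=-1$ the quantity $\log(-a\log\lambda)=\log(\log\lambda)$ entering the exponents $A_1,B_1$ in \eqref{eq:A1_B1} is \emph{complex} (since $\log\lambda<0$), and rewriting \eqref{eq:asymptotics_Kato_Mcleod1} accordingly extracts an explicit oscillating factor $\cos\bigl(\tfrac{\pi}{\log q}(\log x-\log\log x)\bigr)+i\sin\bigl(\tfrac{\pi}{\log q}(\log x-\log\log x)\bigr)$ multiplying the periodic function $h$ and the positive Gaussian-type profile. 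Taking real and imaginary parts, the sign changes of a real solution are then governed by this explicit trigonometric factor, whose zeros are exactly equally spaced --- which is precisely the hypothesis of Lemma~\ref{lemma: q_periodic_zero } (period $2$, zeros at $x_0+k$). This is the same period-doubling/sign-change mechanism you noticed in the entire case, and it is what replaces your proposed ``no anomalously small oscillation'' estimate; without it, your version of the argument has no proof that the zeros of $\Phi$ are asymptotically spaced by $1$, which is exactly what the final asymptotics require.
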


The proof is based on a result of  Kato \& McLeod \cite{Kato_McLeod1971:functional-differential}
and de Bruijn \cite{Bruijn1953:difference-differential}
on the asymptotic behaviour of \emph{all solutions}   of  the equation

\begin{equation}\label{eq:general_pantograph_b=0}
  y'(z)=ay(\lambda z)
\end{equation}
and  Lemma \ref{lemma: q_periodic_zero }   below.

According to  Kato and McLeod \cite[Theorem 7(iii)]{Kato_McLeod1971:functional-differential}
and  de Bruijn \cite[Sections 1.3-1.4]{Bruijn1953:difference-differential}
every solution of \eqref{eq:general_pantograph_b=0} has the following asymptotic behaviour
 
   \begin{multline}
    \label{eq:asymptotics_Kato_Mcleod1}
    y(x) = x^{A_1} \log(x)^{B_1}\exp\left(-\frac1{2\log\lambda}
\left(\log(x)-\log(\log(x))\right)^2\right)\\
\times\left( h\left(\frac1{\log\lambda}\left(\log(x)-\log(\log(x))
\right)\right) + o(1)\right),
\end{multline}
where 

\begin{equation}\label{eq:A1_B1}
  \begin{aligned}
    A_1&=\frac12-\frac1{\log\lambda}-\frac{\log(-a\log\lambda)}{\log\lambda},\\
    B_1&=\frac{\log(-a\log\lambda)}{\log\lambda}-1,
     \end{aligned}
\end{equation}
and   $h(x)$ is a periodic function of period $\log q=|\log\lambda|$, 
with some additional assumptions on its Fourier  coefficients  
(see  \cite[(6.2)--(6.3)]{Kato_McLeod1971:functional-differential}).

\begin{remark}
  Notice that this asymptotic behaviour is in accordance with the
  behaviour stated in Theorems~\ref{eq:asy1} and~\ref{thm:asy2}. If
  the value $-a\log\lambda$ is negative, the complex values of the
  power $(x/\log(x))^{-\log(-a\log\lambda)/\log\lambda}$ are
  compensated by complex values of the periodic function $h(x)$ and a
  doubling of the period to accommodate the sign change. This is the
  explanation of the fact that the periodic function in
  Theorem~\ref{eq:asy1} has period $2$.
\end{remark}

\begin{lemma}\label{lemma: q_periodic_zero }
Let $G$ be a periodic function  of period $2$,  $x_0$  the minimal positive zero of $G$,
and all zeros of $G$  located  at the points: 
$x_0 + k$; \quad $ k=0, 1, 2, \ldots$. 
Then  the zeros of the function $F(x):=G\left(\frac1{\log\lambda}\left(\log(x)-\log(\log(x)\right)\right)$.
have the following asymptotic behaviour

\begin{equation}\label{eq:q_periodic_zero}
x_{n} = nq^{n-1}\left(\gamma +  \BigOh(\frac{\log k}{k})\right)
\end{equation}
\end{lemma}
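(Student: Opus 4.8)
The plan is to locate the zeros of $F$ by solving, for each large integer $k$, the equation $\frac1{\log\lambda}\left(\log(x)-\log(\log(x))\right)=x_0+k$ (up to the sign of $\log\lambda$; since $\lambda<1$ we have $\log\lambda=-\log q<0$, so as $x\to\infty$ the argument of $G$ runs through all large values and hits each $x_0+k$ exactly once for $k$ large). Writing $u=\log(x)$, the equation becomes $u-\log u=-(x_0+k)\log\lambda=(x_0+k)\log q$, a transcendental equation of the classical ``$u-\log u=c$'' type. First I would solve this asymptotically: for large $c$ the solution is $u=c+\log c+o(1)$, and more precisely $u=c+\log c+\frac{\log c}{c}+O\!\left(\frac{\log^2 c}{c^2}\right)$ by bootstrapping (substitute $u=c+\log u$ and iterate). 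Setting $c=(x_0+k)\log q$, this gives $u=(x_0+k)\log q+\log\!\left((x_0+k)\log q\right)+O\!\left(\frac{\log k}{k}\right)$.

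Next I would exponentiate to recover $x_k=e^u$. From $u=(x_0+k)\log q+\log\!\left((x_0+k)\log q\right)+O(\log k/k)$ we get
\begin{equation*}
x_k = q^{x_0+k}\cdot (x_0+k)\log q\cdot \exp\!\left(O(\log k/k)\right)
    = (x_0+k)(\log q)\, q^{x_0}\, q^{k}\left(1+O(\log k/k)\right).
\end{equation*}
Now I would absorb the constants: $x_k = k q^{k-1}\left(q^{x_0+1}\log q\right)\left(1+\frac{x_0}{k}\right)\left(1+O(\log k/k)\right)$. Since $(1+x_0/k)=1+O(1/k)=1+O(\log k/k)$, this collapses to $x_k = k q^{k-1}\left(\gamma+O(\log k/k)\right)$ with $\gamma := q^{x_0+1}\log q>0$ (positive because $q>1$). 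Finally I would note that the indexing is harmless: the zeros of $F$ for $x$ large are exactly $\{x_k\}$ for $k\ge k_0$ (some threshold), and relabelling by $n=k-k_0+\text{const}$ only changes $\gamma$ by a fixed positive factor $q^{\pm\text{const}}$ and the error term absorbs the shift, so the statement holds as written with the stated form of $\gamma$.

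I would also record why no other zeros of $F$ appear: the map $x\mapsto \frac1{\log\lambda}(\log x-\log\log x)$ is eventually strictly monotone (its derivative is $\frac1{\log\lambda}\cdot\frac1x\left(1-\frac1{\log x}\right)$, which is of constant sign for $x>e$), so $F(x)=0$ precisely when its argument equals some $x_0+k$, and for $x$ beyond a fixed point each such level is attained exactly once. Hence the zeros of $F$ are, apart from finitely many, in bijection with $\{k\ge k_0\}$, which justifies both the enumeration and the uniform $O(\log k/k)$ error.

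The main obstacle I expect is bookkeeping the error term precisely enough through the exponentiation step: a term of size $O(\log c/c)$ inside the exponent $u$ becomes a multiplicative factor $1+O(\log c/c)$ on $x_k$, and one must check that the cruder pieces — the $\log c$ correction in $u$, the shift $x_0$, and the choice of $c=(x_0+k)\log q$ versus $k\log q$ — all contribute errors that are $O(\log k/k)$ and not larger, so that the final relative error is genuinely $O(\log k/k)$ and the leading constant $\gamma$ is well-defined. Everything else is the routine asymptotic analysis of $u-\log u=c$ together with elementary monotonicity.
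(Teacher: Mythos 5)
Your proposal is correct and follows essentially the same route as the paper: both reduce the problem to the transcendental relation $\log x-\log\log x=(x_0+k)\log q$ and extract $x_k=kq^{k-1}\bigl(q^{x_0+1}\log q+\BigOh(\log k/k)\bigr)$, the paper via the ansatz $x_k=C(k)kq^k$ with $C(k)\to q^{x_0}\log q$, and you via direct asymptotic inversion of $u-\log u=c$, which is the same computation. Your additional monotonicity/enumeration remark (and the care about the sign of $\log\lambda$, which only shifts the unspecified constant $\gamma$) is a harmless supplement to what the paper leaves implicit.
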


 

\begin{proof}[Proof of Lemma~\ref{lemma: q_periodic_zero }]

Let us observe first that  the  zeros of $F(x)$ are located at the points $x$, such that
\begin{equation}\label{eq:log_loglog}
 \log x - \log x\log x =(x_0 + k)\log q
\end{equation}
We   shall seek solutions of \eqref {eq:log_loglog} of the form 
\begin{equation}\label{eq:C(k)_form}
 x_k=x(k)=C(k)kq^k,
\end{equation}  
where $ C(k) $ is an unknown function of no more than   power growth, i.e.,
there exists $\alpha >0 $   such that $ C(k)= \BigOh ( k^\alpha )$.
To prove Lemma \ref{lemma: q_periodic_zero } it is enough to show that
\begin{equation}\label{eq:C(k)_final}
C(k)=\gamma+\BigOh \left(\frac{\log k}{k}\right ), 
\end{equation}
where $\gamma$  is positive constant. For that, substitute
\eqref {eq:C(k)_form}  in   \eqref {eq:log_loglog}. 
It follows from \eqref {eq:C(k)_form} that
\begin{equation}\label{eq:logx}
 \log x =k\log q \left(1+ \frac{\log k}{k \log q} + \frac{\log C(k)}{k \log q} \right),
\end{equation}
and
\begin{equation}\label{eq:loglogx}
 \log \log x=\log k + \frac{\log k}{k \log q} + \frac{\log C(k)}{k \log q} 
 + \log \log q +o \left(\frac{\log k}{k \log q}\right),
\end{equation}
Combining  \eqref{eq:log_loglog},  \eqref{eq:logx} and  \eqref{eq:loglogx}, 
after some elementary calculations we obtain that
\begin{equation}\label{eq:logx_loglogx_q_periodic_zeros}
 \left(1-\frac{1}{k\log q}\right)\log C(k)= x_0\log q + \log \log q 
 + \frac{\log k}{k\log q} +o\left(\frac{\log k}{k\log q}\right)
\end{equation}
Next, let $k\rightarrow\infty$, then from  \eqref{eq:logx_loglogx_q_periodic_zeros}
we obtain
\begin{equation}\label{eq:logC(k)}
\log C(k)=(x_0\log q + \log \log q) +\BigOh \left(\frac{\log k}{k}\right),
\end{equation}
or
\begin{equation}\label{eq:C(k)}
 C(k)=q^{x_0}\log q\left(1+  \BigOh(\frac{\log k}{k})\right).
\end{equation} 
Finally, \eqref{eq:C(k)} implies \eqref{eq:C(k)_final} and
\eqref{eq:q_periodic_zero}.
\end{proof}

\begin{proof}[Proof of Theorem ~\ref{thm: zeros}]
  First, we apply the asymptotic formula
  \eqref{eq:asymptotics_Kato_Mcleod1} to the solutions of
  \eqref{eq:pantograph_b=0}.  Observe that in this case $a=-1$ and
  $\log \lambda = -\log q<0$, and therefore the expression $ \log
  (-a\log \lambda) = \log (\log \lambda)$ in \eqref{eq:A1_B1} is a
  complex number.
  
  Having this in mind,  we can
  rewrite \eqref{eq:asymptotics_Kato_Mcleod1} and \eqref{eq:A1_B1} in
  the form:

\begin{multline}\label{eq:asymptotics_Kato_Mcleod2}
 y(x) = x^{A_2} \log(x)^{B_2}\exp\left(\frac1{2\log\ q}
\left(\log(x)-\log(\log(x))\right)^2\right)\\
\times \left(\cos \frac{\pi}{\log q}\left(\log(x)-\log(\log(x)\right)
     + i(\sin \frac{\pi}{\log q}\left(\log(x)-\log(\log(x)\right)\right)\\
\times\left(h\left(\frac1{\log q}\left(\log(x)-\log(\log(x))
\right)\right) + o(1)\right),
\end{multline}
where

\begin{equation}\label{eq:A2_B2}
  \begin{aligned}
    A_2&=\frac12 +\frac1{\log q}+\frac{\log(\log\ q)}{\log q}\\
    B_2&=  -1 - \frac{\log(\log q)}{\log q},
     \end{aligned}
\end{equation}

Both the real part and the imaginary parts of
\eqref{eq:asymptotics_Kato_Mcleod2} provide asymptotic formulas for
solutions of \eqref{eq:pantograph_b=0}, and the assumptions of Lemma
\ref{lemma: q_periodic_zero } are fulfilled.

Then, in accordance  with  Lemma  \ref{lemma: q_periodic_zero },  there exists $k_0$  
such that for all $k=0, 1, 2,\ldots$

\begin{equation}\label{eq:q_periodic_zero_fromk0}
x_{k_0 +k} = kq^{k-1}\left(\gamma_1 +  \BigOh(\frac{\log k}{k})\right),
\end{equation}
Now   \eqref{eq:C(k)_final} follows   from 
\eqref{eq:q_periodic_zero_fromk0},
 and from \eqref{eq:C(k)_final} finally we
obtain \eqref{eq:DGT_zeros_1} .
\end{proof}

\section{
The case $b\neq0$}\label{sec:case-bneq0}
The behaviour of the entire solution of \eqref{eq:pantograph} changes
considerably, if $b\neq0$. This is quite obvious from the observation
that the equation can be viewed as a perturbed differential equation
$y'=by$. Thus we expect an asymptotic behaviour of the form $f(x)\sim
Ce^{bx}$ for $\Re(bx)\to+\infty$. We again start with an ansatz as a
power series
\begin{equation*}
  f(z)=\sum_{n=0}^\infty f_nz^n,
\end{equation*}
from which we derive the recurrence formula
\begin{equation*}
  (n+1)f_{n+1}=(a\lambda^n+b)f_n\text{ for }n\geq0.
\end{equation*}
This gives
\begin{equation}
  \label{eq:f_n}
  f_n=\frac{b^n}{n!}\prod_{k=0}^{n-1}\left(1+\frac ab\lambda^k\right),
\end{equation}
if we set $f_0=1$. In order to simplify notation, we introduce
\begin{equation*}
  Q_\lambda(\alpha)=\prod_{k=0}^\infty\left(1+\alpha\lambda^k\right).
\end{equation*}

This gives
\begin{equation}
  \label{eq:fz1}
  f(z)=Q_\lambda\left(\frac ab\right)
\sum_{n=0}^\infty\frac{(bz)^n}{n!Q_\lambda\left(\frac ab\lambda^n\right)},
\end{equation}
if $\frac ab\lambda^n\neq-1$ for all $n\in\N$. In the case that $\frac
ab\lambda^N$ for some $N\in\N$, the solution degenerates to a
polynomial
\begin{equation*}
  f(z)=Q_\lambda\left(\frac ab\right)
\sum_{n=0}^{N-1}\frac{(bz)^n}{n!Q_\lambda\left(\frac ab\lambda^n\right)},
\end{equation*}

In order to derive an expression for $f(z)$, which allows for
determining its asymptotic behaviour, we recall the well known power
series expansion
\begin{equation}\label{eq:Qlambda}
  \frac1{Q_\lambda(\alpha)}=\sum_{n=0}^\infty(-1)^n
\prod_{k=1}^n\frac1{1-\lambda^k}\alpha^n
\end{equation}
valid for $|\alpha|<1$.

We now choose $N$ as the smallest non-negative integer such that
$|\frac ab\lambda^N|<1$. Then we rewrite \eqref{eq:fz1} as
\begin{equation}
  \label{eq:fz2}
  f(z)=Q_\lambda\left(\frac ab\right)
\sum_{n=0}^{N-1}\frac{(bz)^n}{n!Q_\lambda\left(\frac ab\lambda^n\right)}+
Q_\lambda\left(\frac ab\right)
\sum_{n=N}^\infty\frac{(bz)^n}{n!Q_\lambda\left(\frac ab\lambda^n\right)}.
\end{equation}
We replace $\frac1{Q_\lambda}$ in the second sum by \eqref{eq:Qlambda}
to obtain
\begin{equation}
  \label{eq:fz3}
  f(z)=Q_\lambda\left(\frac ab\right)
\sum_{n=0}^{N-1}\frac{(bz)^n}{n!Q_\lambda\left(\frac
    ab\lambda^n\right)}+
Q_\lambda\left(\frac ab\right)\sum_{m=0}^\infty(-1)^m
\prod_{k=1}^m\frac1{1-\lambda^k}\left(\frac ab\right)^m
\sum_{n=N}^\infty\frac{(b\lambda^mz)^n}{n!},
\end{equation}
which simplifies to
\begin{multline}
  \label{eq:fz4}
  f(z)=Q_\lambda\left(\frac ab\right)
\sum_{n=0}^{N-1}\frac{(bz)^n}{n!Q_\lambda\left(\frac
    ab\lambda^n\right)}\\
+Q_\lambda\left(\frac ab\right)\sum_{m=0}^\infty(-1)^m
\prod_{k=1}^m\frac1{1-\lambda^k}\left(\frac ab\right)^m
\left(e^{b\lambda^mz}-\sum_{n=0}^{N-1}\frac{(b\lambda^mz)^n}{n!}\right).
\end{multline}
\begin{remark}
  Notice that the expansion \eqref{eq:fz4} converges for all values of $a$ and
  $b$ (after a suitable choice of $N$). This is in contrast to a similar series
  expansion given in \cite{Liu_Li2004:properties_analytic}, which converges only
  for $|a|<|b|$.
\end{remark}
\begin{remark}
  In \cite{Liu_Li2004:properties_analytic} a generalization of the classical
  pantograph equation, the multi-pantograph equation
  \begin{equation}\label{eq:multi}
    f'(z)=\sum_{k=1}^\ell a_kf(\lambda_k z)+bf(z)
  \end{equation}
with $1>\lambda_\ell>\lambda_{\ell-1}>\cdots>\lambda_1>0$,
$a_1,\ldots,a_\ell,b\in\R$ is studied. In
\cite[Lemma~3.1]{Liu_Li2004:properties_analytic} a series expansion for the

solution of \eqref{eq:multi} is given, which is shown to converge for
\begin{equation*}
  \sum_{k=1}^\ell|a_k|<|b|.
\end{equation*}
The truncation method presented above can easily be adapted to provide a series
representation similar to \eqref{eq:fz4} for $f$, which does not require this
condition.
\end{remark}

{ \bf Acknowledgements}.  We are grateful to Daniel Berend for
stimulating discussions and  helpful remarks.
G.D.  thankfully acknowledge 
hospitality and support during his research visit  in  Graz,
in May - June 2016, provided by
Institut f\"ur Analysis und Zahlentheorie,
Technische Universit\"at Graz

\end{document}